\documentclass{amsart}

\RequirePackage{setspace}

\usepackage{hyperref}
\RequirePackage{amsmath, amsfonts, amssymb, amsthm}
\RequirePackage[utf8]{inputenc}
\RequirePackage{mathrsfs}
\RequirePackage[english]{babel}
\RequirePackage[T1]{fontenc}
\RequirePackage{mathpple}
\RequirePackage{tikz-cd}
\RequirePackage{mathtools}
\RequirePackage{stmaryrd}
\RequirePackage{xcolor}
\RequirePackage{faktor}
\usepackage{multirow}

\RequirePackage[%
left = \glqq,%
right = \grqq,%
leftsub = \glq,%
rightsub = \grq%
]{dirtytalk}

\newtheorem{thm}{Theorem}[section]

\newtheorem{lem}[thm]{Lemma}

\newtheorem{problem}{Problem}
\newtheorem{claim}{Claim}

\DeclareMathOperator{\tight}{t}

\DeclareMathOperator{\tb}{tb}

\title[]{The Cartesian product of any family of $W$-spaces is $\kappa$-Fr\'{e}chet-Urysohn.}

\begin{document}

\author[M.\ Krupski]{Mikołaj Krupski}
\address{Institute of Mathematics\\ University of Warsaw\\ ul. Banacha 2\\
02--097 Warszawa, Poland }
\email{mkrupski@mimuw.edu.pl}
\author[K.\ Kucharski]{Kacper Kucharski}
\address{Institute of Mathematics\\ University of Warsaw\\ ul. Banacha 2\\
02--097 Warszawa, Poland }
\email{k.kucharski6@uw.edu.pl}

\date{\today}

\begin{abstract}
In this note, we prove that an arbitrary product of $W$-spaces is a $\kappa$-Fr\'{e}chet-Urysohn space. We also show that the boundary tightness of a product $X \times Y$ is at most $\kappa$ provided that $X$ is compact with tightness $t(X) \leq \kappa$ and $Y$ has boundary tightness $\tb(Y) \leq \kappa$. The first result fully resolves, and the second partially addresses, two questions recently raised by Tkachuk.
\end{abstract}


\subjclass[2020]{54B10, 54D55, 54A20}

\keywords{$W$-space, $\kappa$-Fr\'{e}chet-Urysohn property, tightness, boundary tightness, Cartesian products}

\maketitle

\section{Introduction}

One of the main tasks of general topology is the studying the behavior of topological properties under Cartesian products: given an arbitrary family $\{X_\gamma\}_{\gamma \in \Gamma}$ of spaces possessing a property $\mathcal{P}$, does their product $\prod_{\gamma \in \Gamma} X_\gamma$ satisfy a property $\mathcal{Q}$?\smallskip

Our particular interest lies in the following properties\footnote{For definitions we refer the reader to Section \ref{sec:defin}.} that generalize first-countability.
\smallskip

\begin{center}
\begin{tikzcd}
                         & \text{first-countable} \arrow[ld, Rightarrow] \arrow[rd, Rightarrow] &                            \\
W\text{-space} \arrow[rd, Rightarrow] &                                                  & \text{bisequential} \arrow[ld, Rightarrow] \\
                         & \text{Fr\'{e}chet-Urysohn} \arrow[d, Rightarrow]                         &                            \\
                         & \kappa\text{-Fr\'{e}chet-Urysohn}                                        &
\end{tikzcd}
\end{center}

The preservation of these properties under products has been extensively studied. Franklin \cite{F} showed that the Fr\'{e}chet-Urysohn property is not productive, and Simon \cite{S} provided a counterexample even within the class of compact spaces. Turning to weaker properties, Liu and Ludwig proved that while any Cartesian product of bisequential spaces is $\kappa$-Fr\'{e}chet-Urysohn \cite[Theorem 4.1]{LL}, products of Fr\'{e}chet-Urysohn spaces can fail to be $\kappa$-Fr\'{e}chet-Urysohn \cite[Corollary 4.1]{LL}.
\smallskip

This naturally leads to the corresponding question for $W$-spaces, which lie on the other side of the diagram above.
In his recent paper, Tkachuk posed this problem explicitly \cite[Problem 4.8]{T}:

\begin{problem}\label{problem:1}
Let $\{X_{\gamma}\}_{\gamma \in \Gamma}$ be a family of $W$-spaces. Is it true that the product space $\prod_{\gamma \in \Gamma} X_{\gamma}$ is a $\kappa$-Fr\'{e}chet-Urysohn space?
\end{problem}

In Section 3, we resolve Problem \ref{problem:1} in the affirmative (see Theorem \ref{thm:main}).\medskip

A cardinal invariant closely related to first-countability is the tightness $\tight(X)$ of a space $X$. Recently, a natural weakening of this notion, called {\it boundary tightness} of $X$ and denoted $\tb(X)$, was introduced by Tkachuk \cite{T} and studied independently by Gabriyelyan \cite{Ga}.\smallskip

A celebrated theorem of Malykhin \cite{M} states that the product of an arbitrary family of compact spaces $\{X_\gamma:\gamma\in \Gamma\}$ has tightness $\tight(\prod_{\gamma \in \Gamma} X_{\gamma}) \leq \kappa$, provided that $|\Gamma| \leq \kappa$ and $\tight(X_{\gamma}) \leq \kappa$ for all $\gamma \in \Gamma$. It is not clear if an analogous result can be obtained for boundary tightness, even in the simplest case for a product of two spaces (see \cite[Problem 4.7]{T}):

In Section 4, we show that if $X$ is compact with $\tight(X) \leq \kappa$ and $Y$ is a space with $\tb(Y) \leq \kappa$, then the boundary tightness $\tb(X \times Y)$ of the product $X \times Y$ does not exceed $\kappa$ (see Theorem \ref{thm:main2}).

\section{Notation and definitions.}\label{sec:defin}
All spaces considered in this note are Tychonoff topological spaces. As usual, $\omega$ denotes the first infinite cardinal, and is identified with the set of natural numbers $\{0,1,2,\dots\}$. For a set $X$, by $X^{< \omega}$ we denote the set of all finite sequences with values in $X$.\smallskip

Recall that a space $X$ is \textit{Fr\'{e}chet-Urysohn} if for any $A \subset X$ if $x \in \overline{A}$, then there exists a sequence $(x_n)_{n \in \omega} \subset A$ converging to $x$. A space $X$ is called \textit{$\kappa$-Fr\'{e}chet-Urysohn} if it is Fr\'{e}chet-Urysohn with respect to all open subsets of $X$, i.e., if for any open subset $U$ of $X$ if $x\in \overline{U}$, then there exists a sequence $(x_n)_{n \in \omega} \subset U$ converging to $x$.  Here $\kappa$ does not denote any cardinal number and is just a part of the name of the property.\smallskip

The notion of a $W$-space was introduced by Gruenhage \cite{Gr} and it is defined in the language of a certain topological game. Consider the following two Player game on a space $X$ and some point $x \in X$. In the $n$-th round, Player I picks some open neighborhood of $x$, say $U_n$, and Player II responds by choosing a point $x_n \in U_n$. Player I wins
if the sequence $(x_n)_{n \in \omega}$ converges to $x$. Otherwise, Player II wins. This game is called the \textit{$W$-game at $x$} and is denoted by $W(X,x)$.

We follow Gruenhage's terminology \cite{Gr} for the $W$-game.
Let $\tau(X,x)$ denote the family of all nonempty open neighborhoods of $x \in X$.
Since the strategy $\sigma$ of the $W$-game at $x$ for player I depends only on the moves of player II, we can view such a strategy as a function $\sigma:X^{<\omega}\to \tau(X,x)$ that assigns to each finite sequence in $X$ an open neighborhood of $x$. We say that a finite sequence $s=(x_1,\ldots,x_n)\in X^{<\omega}$ is a \textit{$\sigma$-sequence} if $s=\emptyset$ or $x_{1}\in \sigma(\emptyset)$ and $x_{i+1}\in \sigma(x_1,\ldots ,x_i)$ for $i=1,\ldots ,n-1$. An infinite sequence $(x_1,x_2,\ldots)$ is a \textit{$\sigma$-sequence} if all of its initial segments are $\sigma$-sequences. The strategy $\sigma$ is winning if every infinite $\sigma$-sequence converges to $x$. Note that if $s\in X^{<\omega}$ is not a $\sigma$-sequence, then we may define $\sigma(s)$ arbitrarily, say $\sigma(s)=X$.

A point $x \in X$ is called a \textit{$W$-point} if Player I has a winning strategy in the game $W(X,x)$. If all points of $X$ are $W$-points, then such a space is called a {\it $W$-space}.\smallskip

Now, let $\kappa$ be an infinite cardinal number. For a space $X$ and a subspace $A \subset X$ the $\kappa$-closure of $A$ in $X$ is the set
$$
\overline{A}^{\kappa} = \bigcup \{\overline{B} \colon B \subset A \; \text{ and } \; |B| \leq \kappa\}.
$$
Recall that the {\it tightness} of $X$ is the following cardinal number $\tight(X)$:
$$
\tight(X) = \min\{\kappa \colon \forall A \subset X \; (\overline{A} = \overline{A}^{\kappa})\}.
$$
In his recent work \cite{T} Tkachuk considered the following modification of $\tight(X)$ by restricting our attention to open subspaces of a given space $X$:
$$
\tb(X) = \min\{\kappa \colon \forall U \subset X \; \text{ if } U \text{ is open, then } \; (\overline{U} = \overline{U}^{\kappa})\}.
$$

\section{Any product of $W$-spaces is $\kappa$-Fr\'{e}chet-Urysohn.}
In this section, we prove that an arbitrary product of $W$-spaces is $\kappa$-Fr\'{e}chet-Urysohn. To this end, we need the following two lemmas. The first is a recent result of Tkachuk, while the second, due to Gruenhage, is well known.

\begin{lem}\cite[Corollary 3.14]{T}\label{lem:tb_W-spaces}
If $X$ is the product of an arbitrary family of $W$-spaces, then $\tb(X) \leq \omega$.
\end{lem}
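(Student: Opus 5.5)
The plan is to show directly that for an open $U\subset X=\prod_{\gamma\in\Gamma}X_\gamma$ and a point $x\in\overline{U}$, there is a countable $B\subset U$ with $x\in\overline{B}$. The idea is to combine winning strategies $\sigma_\gamma$ of Player I in $W(X_\gamma,x_\gamma)$ with a countable closing-off argument over coordinates. Two features make this work. First, a basic open box $V\subset U$ constrains only finitely many coordinates. Second, we may choose points of $U$ that agree with $x$ off a finite set. Each time we pick a point, we pick it inside some basic box $V_b\subset U$ with finite support $\operatorname{supp}(V_b)$, and we choose $b$ so that $b_\gamma=x_\gamma$ for all $\gamma\notin \operatorname{supp}(V_b)$.

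\textbf{Construction.} We build recursively finite sets $F_0\subset F_1\subset\cdots\subset\Gamma$ and countable sets $B_0\subset B_1\subset\cdots\subset U$ of points of this special form. We also fix a bookkeeping enumeration so that every coordinate that ever enters $S=\bigcup_n F_n$ is activated at some stage $n_\gamma$.

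At stage $n$, consider every finite sequence $(b^1,\dots,b^k)$ of already chosen points (countably many). For each $\gamma\in F_n$, apply $\sigma_\gamma$ to the coordinate projections of the terms of the sequence chosen after stage $n_\gamma$; this is legitimate, since it just amounts to starting the game at $x_\gamma$ late. The resulting finite box
\[
\textstyle\bigcap_{\gamma\in F_n}\pi_\gamma^{-1}\bigl(\sigma_\gamma(\dots)\bigr)
\]
is a neighbourhood of $x$, so it meets $U$. Pick a basic box inside $U$ and inside this neighbourhood, and pick a point $b$ of the special form in it. Put all these points into $B_{n+1}$, and add the supports of the boxes used to the coordinate pool feeding $F_{n+1}$. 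Then $B=\bigcup_n B_n$ is countable, and every $b\in B$ equals $x$ outside $S$.

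\textbf{Verification.} Take a basic neighbourhood $W$ of $x$ with finite support $G$. Since the points of $B$ agree with $x$ off $S$, only the finitely many coordinates in $G\cap S$ matter. Choose $n$ so large that $G\cap S\subset F_n$. Now run the construction along a single branch: starting at stage $n$, repeatedly take the point chosen for the sequence of previously chosen points along this branch. This produces $b^1,b^2,\dots\in B$ such that, for each $\gamma\in G\cap S$, the tail of $(b^k_\gamma)_k$ after stage $n_\gamma$ is a $\sigma_\gamma$-sequence. Hence $b^k_\gamma\to x_\gamma$ for every $\gamma\in G\cap S$. On $G\setminus S$ the points coincide with $x$. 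Therefore $b^k\in W$ for large $k$, and so $W\cap B\neq\emptyset$.

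\textbf{Main obstacle.} The delicate step is the bookkeeping that makes a single branch of chosen points simultaneously a $\sigma_\gamma$-play in every relevant coordinate, even though coordinates enter $S$ at different stages. The plan is to index the choices by finite sequences of previously chosen points, so that every branch is available and the construction tree is still countable. We then feed each newly activated coordinate's strategy only the tail of the branch after its activation stage. A coordinate outside $F_n$ at a given stage causes no harm: the chosen point either equals $x_\gamma$ there or $\gamma$ lies in a support, and in the latter case $\gamma$ enters $S$ and is handled from its activation on. I would check carefully that the finiteness of $G\cap S$ lets us wait until all its coordinates are active before starting the branch.
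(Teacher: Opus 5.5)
Your proof is correct, and it takes a genuinely different route from the paper. The paper does not prove this lemma at all: it quotes it from Tkachuk (Corollary 3.14) and combines it with Gruenhage's theorem on countable products (Lemma~\ref{lem:ctbl_prod_W-spaces}) to obtain Theorem~\ref{thm:main}. You instead give a self-contained closing-off argument built on two ingredients. The first is taking points of $U$ in basic boxes and equal to $x$ off the finite support of the box. The second is activating the coordinate strategies one at a time and feeding each $\sigma_\gamma$ only the play after its activation, which is the device in Gruenhage's proof for countable products. These are exactly what is needed.

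Two small points of hygiene:
\begin{itemize}
\item The empty sequence must be among the sequences treated at every stage, so that a branch can start at stage $n$.
\item The phrase ``terms chosen after stage $n_\gamma$'' should refer to nodes of your tree (choice events) rather than to points, since one point may be picked at several stages. Alternatively, drop the tail device and feed $\sigma_\gamma$ the whole sequence. That suffices for your verification, because your branch starts only after every coordinate of $G\cap S$ is active.
\end{itemize}

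Your route also buys more than the lemma. Keep the tail device and follow a single branch from stage $0$: at stage $k$, pick $b^{k+1}$ of the special form in $U\cap\bigcap_{\gamma\in F_k}\pi_\gamma^{-1}\bigl(\sigma_\gamma(b^{n_\gamma+1}_\gamma,\dots,b^{k}_\gamma)\bigr)$, where the argument of $\sigma_\gamma$ is empty when $k=n_\gamma$. Then for each $\gamma\in S$ the sequence $(b^k_\gamma)_{k>n_\gamma}$ is a $\sigma_\gamma$-play, hence converges to $x_\gamma$. For $\gamma\notin S$ we have $b^k_\gamma=x_\gamma$. So $b^k\to x$ with all $b^k\in U$. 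Thus no tree is needed, and the same construction proves Theorem~\ref{thm:main} directly, bypassing both Lemma~\ref{lem:tb_W-spaces} and Lemma~\ref{lem:ctbl_prod_W-spaces}.
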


\begin{lem}\cite[Theorem 4.1]{Gr}\label{lem:ctbl_prod_W-spaces}
The countable product of $W$-spaces is a $W$-space.
\end{lem}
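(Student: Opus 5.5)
Let $X=\prod_{n\in\omega}X_n$ be a countable product of $W$-spaces, with projections $\pi_n\colon X\to X_n$. Fix $x\in X$. For each $n$ choose a winning strategy $\sigma_n\colon X_n^{<\omega}\to\tau(X_n,\pi_n(x))$ for Player I in $W(X_n,\pi_n(x))$. I would build a winning strategy $\sigma$ for Player I in $W(X,x)$ by dovetailing the coordinate strategies. Coordinate $n$ is activated at round $n$, and from then on it follows $\sigma_n$ applied to the projections of Player II's moves made from round $n$ onward. (A finite product is a special case, obtained by taking the remaining factors to be one-point spaces, so only the countably infinite case needs attention.)

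Index Player II's moves as $p_0,p_1,p_2,\dots$ in $X$, so that $p_k$ is chosen in response to the $k$-th move of Player I. Given the finite history $(p_0,\dots,p_{k-1})$, Player I's $k$-th move is
$$
\sigma(p_0,\dots,p_{k-1})=\prod_{n\le k}\sigma_n\bigl(\pi_n(p_n),\pi_n(p_{n+1}),\dots,\pi_n(p_{k-1})\bigr)\times\prod_{n>k}X_n .
$$
For $n=k$ the argument of $\sigma_n$ is the empty sequence, so this factor is $\sigma_k(\emptyset)$. The set above is a basic open set, since only finitely many factors are proper. It contains $x$, because each $\sigma_n$ returns a neighborhood of $\pi_n(x)$. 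So $\sigma$ is a legitimate strategy.

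Now let $(p_0,p_1,\dots)$ be an infinite $\sigma$-sequence and fix a coordinate $n$. For every $k\ge n$ we have $p_k\in\sigma(p_0,\dots,p_{k-1})$, and therefore
$$
\pi_n(p_k)\in\sigma_n\bigl(\pi_n(p_n),\dots,\pi_n(p_{k-1})\bigr).
$$
This says exactly that $(\pi_n(p_n),\pi_n(p_{n+1}),\dots)$ is an infinite $\sigma_n$-sequence. Since $\sigma_n$ is winning, this sequence converges to $\pi_n(x)$. Discarding the finitely many initial terms $\pi_n(p_0),\dots,\pi_n(p_{n-1})$ does not affect convergence, so $\pi_n(p_k)\to\pi_n(x)$ as $k\to\infty$. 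This holds for every $n$, and convergence in the product topology is coordinatewise, so $p_k\to x$. Hence $\sigma$ is winning, $x$ is a $W$-point, and since $x$ was arbitrary, $X$ is a $W$-space.

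The only delicate step is the bookkeeping. Each coordinate must see a genuine $\sigma_n$-sequence, starting from its own activation round with $\sigma_n(\emptyset)$. At the same time, each move of Player I must constrain only finitely many coordinates, so that it is open in the product topology. Delaying the activation of coordinate $n$ until round $n$ reconciles these two requirements, and after that the verification is routine.
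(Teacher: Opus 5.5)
Your proof is correct. The paper gives no proof of this lemma and simply quotes it from Gruenhage's paper (Theorem 4.1 there), so there is no in-paper route to compare against. What you have written is a complete, self-contained proof of the cited fact, using the standard dovetailing of the coordinate strategies.

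The one step that needs care is the one you flagged. Coordinate $n$ must be fed only the projections $\pi_n(p_n),\pi_n(p_{n+1}),\dots$ of moves made after its activation. For $k<n$ the $n$-th factor of Player I's move is all of $X_n$, so $\pi_n(p_k)$ need not lie in $\sigma_n(\emptyset)$. Feeding $\sigma_n$ the full history $(\pi_n(p_0),\dots,\pi_n(p_{k-1}))$ would therefore not produce a $\sigma_n$-sequence.

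Your verification handles this correctly: $\pi_n(p_n)\in\sigma_n(\emptyset)$ and $\pi_n(p_{n+j})\in\sigma_n(\pi_n(p_n),\dots,\pi_n(p_{n+j-1}))$. The remaining points are routine, as you say: each move has only finitely many proper factors, so it is a basic open neighbourhood of $x$, and convergence in the product is coordinatewise.
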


We are now ready to prove our main result.

\begin{thm}\label{thm:main}
Any product of $W$-spaces is $\kappa$-Fr\'{e}chet-Urysohn.
\end{thm}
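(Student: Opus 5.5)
Let $X=\prod_{\gamma\in\Gamma}X_\gamma$ be a product of $W$-spaces, let $U\subset X$ be open and let $x\in\overline{U}$. We must find a sequence in $U$ converging to $x$. The plan is to reduce to a countable subproduct, where Lemma \ref{lem:ctbl_prod_W-spaces} applies, using the countable boundary tightness from Lemma \ref{lem:tb_W-spaces}.

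\textbf{Step 1: countable reduction.} By Lemma \ref{lem:tb_W-spaces}, $\tb(X)\le\omega$. Hence there is a countable $B\subset U$ with $x\in\overline{B}$. We may shrink $U$ around $B$. For each $b\in B$ pick a basic open box $V_b\subset U$ containing $b$; each $V_b$ depends on a finite set $F_b\subset\Gamma$ of coordinates. Put $C=\bigcup_{b\in B}F_b$, which is countable, and let $W=\bigcup_{b\in B}V_b$. Then $W\subset U$ is open and $x\in\overline{W}$. Moreover $W=\pi_C^{-1}(\pi_C(W))$, where $\pi_C\colon X\to X_C=\prod_{\gamma\in C}X_\gamma$ is the projection, and $\pi_C(W)$ is open in $X_C$.

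\textbf{Step 2: work in the countable subproduct.} By Lemma \ref{lem:ctbl_prod_W-spaces}, $X_C$ is a $W$-space. Every $W$-space is Fr\'echet--Urysohn: Player I's winning strategy $\sigma$ at a point $y\in\overline{A}$ lets Player II answer each $\sigma(s)$ with a point of $A\cap\sigma(s)$, and the resulting $\sigma$-sequence is a sequence in $A$ converging to $y$. Since $\pi_C$ is continuous, $\pi_C(x)\in\overline{\pi_C(W)}$. Hence there is a sequence $(y_n)\subset\pi_C(W)$ converging to $\pi_C(x)$.

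\textbf{Step 3: lift.} Define $x_n\in X$ by $x_n(\gamma)=y_n(\gamma)$ for $\gamma\in C$ and $x_n(\gamma)=x(\gamma)$ for $\gamma\notin C$. Since $W$ is determined by the coordinates in $C$, each $x_n\in W\subset U$. Convergence in a product is coordinatewise, so $x_n\to x$.

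I expect the main obstacle to be Step 1. The subtle point is that one needs an open set depending on only countably many coordinates whose closure still contains $x$. Fixing a countable $B$ and taking the union of boxes around its points handles this. The remaining steps are routine.
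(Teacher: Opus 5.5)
Your proof is correct and takes essentially the same route as the paper. You use countable boundary tightness to get a countable $B\subset U$, pass to the countable subproduct on the coordinates supporting boxes around $B$, apply Gruenhage's lemma, and lift by copying the coordinates of $x$ elsewhere; the only cosmetic difference is that you first isolate ``$W$-spaces are Fr\'echet--Urysohn'' and apply it to the open set $\pi_C(W)$, whereas the paper plays the $W$-game directly against the countable set $\pi_A(S)$.
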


\begin{proof}
Let $\{X_{\gamma}\}_{\gamma \in \Gamma}$ be a family of $W$-spaces and let $X$ denote the product $\prod_{\gamma \in \Gamma} X_{\gamma}$. For $\gamma \in \Gamma$ and $A \subset \Gamma$, let $\pi_{\gamma} \colon X \to X_{\gamma}$ and $\pi_A \colon X \to \prod_{\gamma \in A} X_{\gamma}$ be the standard projections.
Let $U$ be a nonempty open subset of $X$, and fix $x \in \overline{U}$. By Lemma~\ref{lem:tb_W-spaces}, there exists a countable set $S \subset U$ with $x \in \overline{S}$. Let $\{s_n \colon n \in \omega\}$ be an enumeration of $S$.
For each $n \in \omega$, find a finite set $F_n \subset \Gamma$, and for each $\gamma \in F_n$ let $U_{\gamma}^n$ be an open subset of $X_{\gamma}$ such that
$$
s_n \in \bigcap_{\gamma \in F_n} \pi_{\gamma}^{-1}(U_{\gamma}^n) \subset U.
$$
In other words, $\bigcap_{\gamma \in F_n} \pi_{\gamma}^{-1}(U_{\gamma}^n)$ is a basic open neighborhood of $s_n$ in $X$ contained in $U$.
Let $A = \bigcup_{n \in \omega} F_n$ and define
$$
Y = \pi_A(X) = \prod_{\gamma \in A} X_{\gamma}.
$$
For $\gamma \in A$ let $p_{\gamma} \colon Y \to X_{\gamma}$ be the standard projection.\smallskip

Set $y = \pi_A(x)$, $t_n = \pi_A(s_n)$, and $V_n = \bigcap_{\gamma \in F_n} p_{\gamma}^{-1}(U_{\gamma}^n)$ for each $n \in \omega$. Finally, let $T = \{t_n \colon n \in \omega\}$ and notice that $y \in \overline{T}$ (where the closure is taken in $Y$).\smallskip

Since $A$ is countable, $Y$ is a $W$-space by Lemma \ref{lem:ctbl_prod_W-spaces}. Let $\sigma$ be a winning strategy for Player I in the $W$-game $W(Y,y)$. By induction, we will construct
\begin{itemize}
    \item a sequence $(n_k)_{k \in \omega}$ of natural numbers and
    \item a convergent sequence $y_n \to y$ of points of $Y$,
\end{itemize}
such that
\begin{equation}\label{eq}
 y_k \in V_{n_k} \;\text{ and }\; y_{k + 1} \in \sigma(y_0, \dots, y_{k}) \; \text{ for all }\; k \in \omega.
\end{equation}

Let $W_0 = \sigma(\emptyset)$. Since $y \in \overline{T}$ and $W_0$ is an open neighborhood of $y$, we have $W_0 \cap T \neq \emptyset$. Hence, we can find $n_0$ such that $t_{n_0} \in W_0$. Since we also have $t_{n_0} \in V_{n_0}$, if $y_0=t_{n_0}$ then \eqref{eq} holds for $y_0$.

Fix $k\in \omega$ and assume that the numbers $n_0, \dots, n_k$ and points $y_0, \dots, y_k$ are already chosen. Consider the set $W_{k + 1} = \sigma(y_0, \dots, y_k)$. Again, since $y \in \overline{T}$ and $W_{k+1}$ is an open neighborhood of $y$, we have $U_{k + 1} \cap T \neq \emptyset$. Hence, there exists $n_{k + 1}$ with $t_{n_{k + 1}} \in W_{k + 1}$. Since $t_{n_{k + 1}} \in V_{n_{k + 1}}$, if $y_{k + 1}=t_{n_{k+1}}$ then \eqref{eq} is satisfied. This finishes the induction.

The strategy $\sigma$ is winning for Player I, so by \eqref{eq}, the sequence $(y_n)_{n \in \omega}$ converges to $y$. Define $x_n\in X$ by
$$
x_n(\gamma)=
\begin{cases}
y_n(\gamma) \quad \text{if } \gamma\in A\\
x(\gamma) \quad \text{if } \gamma\notin A.
\end{cases}
$$
It is readily seen that the
sequence $(x_n)_{n \in \omega}$ converges to $x$. Moreover, since $y_k \in V_{n_k}$ for all $k \in \omega$ (cf. \eqref{eq}), it follows that $x_n \in U$ for all $n \in \omega$. This finishes the proof.
\end{proof}

\section{Boundary tightness and products.}

It is an open question whether the product of two compact spaces $X$ and $Y$ with $\tb(X) \le \kappa$ and $\tb(Y) \le \kappa$ has boundary tightness less than or equal to $\kappa$ (see \cite[Problem 4.7]{T}). However, it turns out that if we assume more about one of the factors, namely that one of the factors has tightness less than or equal to $\kappa$, then $\tb(X \times Y) \le \kappa$. The proof is a slight modification of that of the celebrated theorem of Malykhin \cite{M} (cf. \cite[p. 113]{J}).

\begin{thm}\label{thm:main2}
Let $\kappa$ be an infinite cardinal. If $X$ is compact with $\tight(X)\leq \kappa$ and $Y$ is a Tychonoff space with $\tb(Y) \leq \kappa$, then $\tb(X \times Y) \leq \kappa$.
\end{thm}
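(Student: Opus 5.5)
The plan is to adapt the argument behind Malykhin's theorem, using the closedness of the projection $\pi\colon X\times Y\to Y$ (which holds because $X$ is compact). The hypothesis $\tb(Y)\le\kappa$ will only be applied to projections of open sets, and these are again open. We fix an open set $U\subset X\times Y$ and a point $(x,y)\in\overline{U}$. The goal is to find $B\subset U$ with $|B|\le\kappa$ and $(x,y)\in\overline{B}$.

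\emph{Step 1 (points in the fiber over $y$).} Let $W$ be any open neighborhood of $x$ in $X$. Since $W\times Y$ is a neighborhood of $(x,y)$, the point $(x,y)$ lies in the closure of the open set $U\cap(W\times Y)$. Hence $y\in\overline{\pi(U\cap(W\times Y))}$. The set $\pi(U\cap(W\times Y))$ is open in $Y$, since projections are open maps. By $\tb(Y)\le\kappa$ there is $B_W\subset U\cap(W\times Y)$ with $|B_W|\le\kappa$ and $y\in\overline{\pi(B_W)}$.

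Because $X$ is compact, $\pi$ is a closed map, so $\overline{\pi(B_W)}=\pi(\overline{B_W})$. This gives a point $x_W\in X$ with $(x_W,y)\in\overline{B_W}$. Since $B_W\subset \overline{W}\times Y$, we get $x_W\in\overline{W}$. We conclude that $(x_W,y)\in\overline{U}^{\kappa}$.

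\emph{Step 2 (using $\tight(X)\le\kappa$).} Put $D=\{x'\in X: (x',y)\in\overline{U}^{\kappa}\}$. Step 1 shows that every open neighborhood $W$ of $x$ has $\overline{W}\cap D\neq\emptyset$. By regularity of $X$, every neighborhood of $x$ contains such a closure $\overline{W}$, so $x\in\overline{D}$. Since $\tight(X)\le\kappa$, there is $E\subset D$ with $|E|\le\kappa$ and $x\in\overline{E}$.

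For each $e\in E$ we pick $B_e\subset U$ with $|B_e|\le\kappa$ and $(e,y)\in\overline{B_e}$. Let $B=\bigcup_{e\in E}B_e$, so $B\subset U$ and $|B|\le\kappa$. Then $E\times\{y\}\subset\overline{B}$. Taking closures gives $(x,y)\in\overline{E}\times\{y\}\subset\overline{B}$, which is what we want.

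The delicate point is Step 1. There we need both that the set handed to $\tb(Y)$ is open and that a point of $\pi(\overline{B_W})$ can be lifted to the fiber over $y$. The first holds because we project an open set, not a $\kappa$-closure. The second uses compactness of $X$ through closedness of $\pi$. Unlike the proof of Malykhin's theorem, no transfinite iteration of $\kappa$-closures is needed: $\overline{U}^{\kappa}$ is used only once, to define $D$, and $\tight(X)\le\kappa$ alone finishes the argument.
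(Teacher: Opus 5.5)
Your proof is correct and is essentially the paper's argument, written as a direct proof rather than by contradiction. Step 1 is exactly how the paper gets a point of the fiber over $y$ inside $\overline{V}$ (your $W$ plays the role of $V$): restrict $U$, apply $\tb(Y)\le\kappa$ to the open projection, lift a $\kappa$-sized set, and use closedness of $\pi$. Step 2 is the paper's Claim 1 (the fiber of $\overline{U}^{\kappa}$ over $y$ is closed, by $\tight(X)\le\kappa$ and $\kappa$-closedness of $\overline{U}^{\kappa}$), with the regularity observation $x\in\overline{D}$ replacing the separation by $V,W$ and the $\kappa$-closedness checked inline via $B=\bigcup_{e\in E}B_e$.
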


\begin{proof}
Fix a nonempty open set $U \subset X \times Y$ and let $A$ denote its $\kappa$-closure $\overline{U}^{\kappa}$. It is suffices to prove that $\overline{U} = A$. Striving for a contradiction, assume that this is not the case and fix $(x_0,y_0) \in \overline{U} \setminus A$. Set
$$
A_{y_0} = \{x \in X \colon (x,y_0) \in A\}
$$
and note that $x_0 \notin A_{y_0}$.

\begin{claim}\label{cl:1}
The set $A_{y_0}$ is closed.
\end{claim}
To see that this is the case, let $x \in \overline{A_{y_0}}$. Since $\tight(X) \leq \kappa$, there is $B \subset A_{y_0}$ of cardinality $|B| \leq \kappa$ such that $x \in \overline{B}$. Now $(x,y_0) \in \overline{B} \times \{y_0\}$ and since $A$ is $\kappa$-closed and $|B \times \{y_0\}| \leq \kappa$, we have $\overline{B} \times \{y_0\} \subset A$. This shows that $(x,y_0) \in A$, whence $x \in A_{y_0}$.\hfill $\blacksquare$\smallskip

Choose open sets $V,W \subset X$ containing $x_0$ and $A_{y_0}$ respectively such that $\overline {V}\cap W=\emptyset$, and define
$$
U' = U \cap (V \times Y) \;\text{ and }\; A' = \overline{U'}^{\kappa}.
$$

\begin{claim}\label{cl:2}
The set $A'_{y_0} = \{x \in X \colon (x,y_0) \in A'\}$ is empty.
\end{claim}
Indeed, on the one hand, we have $A'_{y_0} \subset A_{y_0} \subset W$, while on the other hand, we have $A' \subset \overline{V} \times Y$ so $A'_{y_0} \subset \overline{V}$. Since $\overline{V} \cap W = \emptyset$, the claim follows.\hfill $\blacksquare$\smallskip


\begin{claim}\label{cl:3}
$(x_0,y_0) \in \overline{U'}$.
\end{claim}
To see that this is the case, fix a basic open neighborhood $U_{x_0} \times U_{y_0}$ of $(x_0,y_0)$. Shrinking $U_{x_0}$ if necessary, without loss of generality, we can assume that $U_{x_0} \subset V$. Since $(x_0,y_0) \in \overline{U}$, we have $U \cap (U_{x_0} \times U_{y_0})\neq \emptyset$. But $U \cap (U_{x_0} \times U_{y_0}) = U'\cap (U_{x_0} \times U_{y_0})$, so we are done.\hfill $\blacksquare$\smallskip

Let $\pi_Y \colon X \times Y \to Y$ be the projection onto $Y$. Consider the set
$$
E = \pi_Y(U').
$$
and note that $E$ is an open subset of $Y$. By continuity of $\pi_Y$ and Claim \ref{cl:3} we get $y_0 \in \overline{\pi_Y(U')} = \overline{E}$. Since $\tb(Y) \leq \kappa$, there exists a set $F \subset E$ of cardinality $|F| \leq \kappa$ with $y_0 \in \overline{F}$. For each $y \in F$, choose $x_y \in X$ such that $(x_y,y) \in U'$ and consider the set
$$
G = \{(x_y,y) \colon y \in F\}.
$$
Obviously, $G \subset U'$ and $|G| \leq \kappa$. Thus, $\overline{G} \subset A'$ because $A'$ is the $\kappa$-closure of $U'$. Now, since $X$ is compact, $\pi_Y(\overline{G})$ is a closed subset of $Y$, whence
$$
\pi_Y(G) = F \subset \overline{F} \subset \pi_Y(\overline{G}).
$$
Since $y_0 \in \overline{F}$, we have $y_0 \in \pi_Y(\overline{G})$. It follows that there exists $x \in X$ such that
$$
(x,y_0) \in \overline{G} \subset A'.
$$
This is a contradiction with Claim \ref{cl:2}.
\end{proof}

\bibliographystyle{siam}
\bibliography{bib.bib}

@article {T,
    AUTHOR = {Tkachuk, V. V.},
     TITLE = {Spaces with accessible boundaries of open sets},
   JOURNAL = {Rev. R. Acad. Cienc. Exactas F\'is. Nat. Ser. A Mat. RACSAM},
  FJOURNAL = {Revista de la Real Academia de Ciencias Exactas, F\'isicas y
              Naturales. Serie A. Matematicas. RACSAM},
    VOLUME = {120},
      YEAR = {2026},
    NUMBER = {1},
     PAGES = {Paper No. 26, 9},
      ISSN = {1578-7303,1579-1505},
   MRCLASS = {54A25 (54A20 54C35)},
  MRNUMBER = {5002794},
       DOI = {10.1007/s13398-025-01820-2},
       URL = {https://doi.org/10.1007/s13398-025-01820-2},
}

@article {M,
    AUTHOR = {Malyhin, V. I.},
     TITLE = {The tightness and {S}uslin number in {${\rm exp}\ X$} and in a
              product of spaces},
   JOURNAL = {Dokl. Akad. Nauk SSSR},
  FJOURNAL = {Doklady Akademii Nauk SSSR},
    VOLUME = {203},
      YEAR = {1972},
     PAGES = {1001--1003},
      ISSN = {0002-3264},
   MRCLASS = {54H05 (54B10)},
  MRNUMBER = {300241},
MRREVIEWER = {Peter\ Kessler},
}

@book {J,
    AUTHOR = {Juh\'asz, Istv\'an},
     TITLE = {Cardinal functions in topology---ten years later},
    SERIES = {Mathematical Centre Tracts},
    VOLUME = {123},
   EDITION = {Second},
 PUBLISHER = {Mathematisch Centrum, Amsterdam},
      YEAR = {1980},
     PAGES = {iv+160},
      ISBN = {90-6196-196-3},
   MRCLASS = {54-02 (54A25)},
  MRNUMBER = {576927},
MRREVIEWER = {W.\ W.\ Comfort},
}

@article {LL,
    AUTHOR = {Liu, Chuan and Ludwig, Lewis D.},
     TITLE = {{$\kappa$}-{F}r\'echet {U}rysohn spaces},
   JOURNAL = {Houston J. Math.},
  FJOURNAL = {Houston Journal of Mathematics},
    VOLUME = {31},
      YEAR = {2005},
    NUMBER = {2},
     PAGES = {391--401},
      ISSN = {0362-1588},
   MRCLASS = {54D55 (54E40)},
  MRNUMBER = {2132843},
MRREVIEWER = {Ljubi\v sa\ Ko\v cinac},
}

@article {Gr,
    AUTHOR = {Gruenhage, Gary},
     TITLE = {Infinite games and generalizations of first-countable spaces},
   JOURNAL = {General Topology and Appl.},
  FJOURNAL = {General Topology and its Applications},
    VOLUME = {6},
      YEAR = {1976},
    NUMBER = {3},
     PAGES = {339--352},
      ISSN = {0016-660X},
   MRCLASS = {54D55 (90D45)},
  MRNUMBER = {413049},
MRREVIEWER = {R.\ Telg\'arsky},
}

@article {Ga,
    AUTHOR = {Gabriyelyan, Saak},
     TITLE = {Spaces with open countable tightness},
   JOURNAL = {Eur. J. Math.},
  FJOURNAL = {European Journal of Mathematics},
    VOLUME = {12},
      YEAR = {2026},
    NUMBER = {2},
     PAGES = {Paper No. 22, 18},
      ISSN = {2199-675X,2199-6768},
   MRCLASS = {54D99 (46A03 54A05 54C35)},
  MRNUMBER = {5076689},
       DOI = {10.1007/s40879-026-00900-w},
       URL = {https://doi.org/10.1007/s40879-026-00900-w},
}

@article {F,
    AUTHOR = {Franklin, S. P.},
     TITLE = {Spaces in which sequences suffice},
   JOURNAL = {Fund. Math.},
  FJOURNAL = {Polska Akademia Nauk. Fundamenta Mathematicae},
    VOLUME = {57},
      YEAR = {1965},
     PAGES = {107--115},
      ISSN = {0016-2736,1730-6329},
   MRCLASS = {54.22},
  MRNUMBER = {180954},
MRREVIEWER = {I.\ Namioka},
       DOI = {10.4064/fm-57-1-107-115},
       URL = {https://doi.org/10.4064/fm-57-1-107-115},
}

@article {S,
    AUTHOR = {Simon, Petr},
     TITLE = {A compact {F}r\'echet space whose square is not {F}r\'echet},
   JOURNAL = {Comment. Math. Univ. Carolin.},
  FJOURNAL = {Commentationes Mathematicae Universitatis Carolinae},
    VOLUME = {21},
      YEAR = {1980},
    NUMBER = {4},
     PAGES = {749--753},
      ISSN = {0010-2628,1213-7243},
   MRCLASS = {54D30 (04A20)},
  MRNUMBER = {597764},
MRREVIEWER = {J.\ E.\ Vaughan},
}

\end{document}